\documentclass[12pt,reqno]{amsart}
\usepackage{hyperref}
\usepackage{color, bm, amscd, tikz-cd}
\setlength{\textheight}{640pt} \setlength{\textwidth}{440pt}
\oddsidemargin -0mm \evensidemargin -0mm \topmargin -15pt
\baselineskip=0.30in

\setcounter{footnote}{1}


\newcommand{\cB}{{\mathcal B}}

\newcommand{\cD}{{\mathcal D}}

\newcommand{\cF}{{\mathcal F}}

\newcommand{\cH}{{\mathcal H}}

\newcommand{\cK}{{\mathcal K}}

\newcommand{\cU}{{\mathcal U}}

\theoremstyle{plain}

\newtheorem{thm}{Theorem}[section]

\newtheorem{corollary}[thm]{Corollary}
\newtheorem{lemma}[thm]{Lemma}

\newtheorem{proposition}[thm]{Proposition}

\newtheorem{remark}[thm]{Remark}

\newtheorem{example}[thm]{Example}
\numberwithin{equation}{section}

\def\textmatrix#1&#2\\#3&#4\\{\bigl({#1 \atop #3}\ {#2 \atop #4}\bigr)}
\def\dispmatrix#1&#2\\#3&#4\\{\left({#1 \atop #3}\ {#2 \atop #4}\right)}
\numberwithin{equation}{section}

\def\textmatrix#1&#2\\#3&#4\\{\bigl({#1 \atop #3}\ {#2 \atop #4}\bigr)}
\def\dispmatrix#1&#2\\#3&#4\\{\left({#1 \atop #3}\ {#2 \atop #4}\right)}

\begin{document}
\title[Rational dilation of tetrablock contractions]{Rational dilation of tetrablock contractions revisited}
\author[J. A. Ball]{Joseph A. Ball}
\address{Department of Mathematics, Virginia Tech, Blacksburg, VA 24061-0123, USA\\ joball@math.vt.edu}
\author[H. Sau]{Haripada Sau}
\address{Department of Mathematics, Virginia Tech, Blacksburg, VA 24061-0123, USA\\ sau@vt.edu, haripadasau215@gmail.com}
\subjclass[2010]{Primary: 47A13. Secondary: 47A20, 47A25, 47A56, 47A68, 30H10}
\keywords{Rational Dilation, Tetrablock}
\thanks{This research of the second-named is supported by SERB Indo-U.S. Postdoctoral Research Fellowship 2017/139.}

\begin{abstract}
A classical result of Sz.-Nagy asserts that a Hilbert-space contraction operator $T$ can be lifted to an isometry
$V$. A more general multivariable setting of recent interest for these ideas is the case where
(i) the unit disk is replaced by a certain domain contained in ${\mathbb C}^3$ (called the {\em tetrablock}),
(ii) the contraction operator $T$ is replaced by a commutative triple $(T_1, T_2, T)$ of
Hilbert-space operators having ${\mathbb E}$ as a
spectral set (a tetrablock contraction) .  The rational dilation question for this setting is whether a
tetrablock contraction $(T_1, T_2, T)$ can be lifted to a tetrablock isometry $(V_1, V_2, V)$ (a
commutative operator tuple which extends to a tetrablock-unitary tuple $(U_1, U_2, U)$---a
commutative tuple of normal operators with joint spectrum
contained in the distinguished boundary of the tetrablock).
We discuss necessary conditions for a tetrablock contraction to have a tetrablock-isometric lift.
We present an example of a tetrablock contraction which does have a tetrablock-isometric lift but violates
a condition previously thought to be necessary for the existence of such a lift. Thus the question of
whether a tetrablock contraction always has a tetrablock-isometric lift appears to be unresolved
 at this time.
 \end{abstract}

\maketitle

\section{Introduction} \label{S:intro}
A seminal development for future progress in nonselfadjoint operator theory is the Sz.-Nagy dilation theorem:
{\em given a contraction operator $T$ on a Hilbert space $\cH$, there is a unitary operator $\widetilde\cU$ on a larger
 Hilbert space $\widetilde \cK$ so that $T^n = P_\cH \widetilde \cU^n|_\cH$ for all $n \in {\mathbb Z}_+$.}
 This result provides a geometric explanation for the von Neumann inequality:  {\em for any Hilbert-space
 contraction operator $T$ and any polynomial $p$, $\| p(T) \| \le \sup_{z \in {\mathbb D}} | p(z) |$}.
 Here $\| p(T) \|$ is the {\em operator norm} $\| p(T)_{\cB(\cH)}$ of $p(T)$ as an element of the Banach algebra
$\cB(\cH)$ of bounded linear operators on the Hilbert space $\cH$.
 In modern language, we say that $\widetilde \cU$ in the Sz.-Nagy dilation theorem is a {\em dilation} of $T$,
 and that the content of the von Neumann inequality is that the unit disk ${\mathbb D}$ is a {\em spectral set}
 for any contraction operator $T$.
 Arveson \cite{ArvesonII} shortly afterwards formulated a multivariable version of this connection between
 the von Neumann inequality and dilation theory, called the {\em rational dilation problem}.
 Fix  a domain  $\Omega$ with compact closure contained in $d$-dimensional Euclidean space ${\mathbb C}^d$.
 Suppose that we are given a commutative  tuple ${\mathbf T} = (T_1, \dots, T_d)$  of
 Hilbert-space operators with Taylor spectrum contained in $\Omega$;  let us note here that for the case where
 $(T_1, \dots, T_d)$ consists of commuting matrices,  the Taylor spectrum amounts to the subset of ${\mathbb C}^d$ consisting
 of the joint eigenvalues of $(T_1, \dots, T_d)$.  If $r$ is any
 function holomorphic on $\Omega$, any reasonable functional calculus (see \cite{Curto} for a survey)
 can be used to define $r({\mathbf T})$; in case $\Omega$ is polynomially convex, it suffices to consider the case where
 $r$ is a polynomial.  We denote by $\operatorname{Rat}(\Omega)$ the algebra of all rational functions holomorphic on
 $\Omega$. We say that {\em $\Omega$ is a spectral set for ${\mathbf T}$} if for all $r \in \operatorname{Rat}(\Omega)$
 it is the case that
 $$
     \| r(\mathbf T) \|_{\cB(\cH)}\leq \sup \{ |r(z)| \colon z \in \Omega \}.
 $$
 Let us say that the operator tuple ${\mathbf U}  = (U_1, \dots, U_d)$ is {\em$\Omega$-unitary}
 if ${\mathbf U}$ is a commutative tuple of normal operators with joint spectrum contained in the distinguished
 boundary $\partial_e \Omega$ of $\Omega$. We say that ${\mathbf T}$ has an {\em $\Omega$-unitary dilation} if there is a $\Omega$-unitary
 tuple  ${\mathbf U} = (U_1, \dots, U_d)$ on a larger Hilbert space $\cK \supset \cH$ such that
 $r({\mathbf T} )= P_\cH r({\mathbf N})|_\cH$ for all $r \in \operatorname{Rat} \Omega$.
 If ${\mathbf T}$ has a $\Omega$-unitary dilation ${\mathbf U}$, it follows that
 \begin{align*}
 \| r({\mathbf T}) \| & = \| P_\cH r({\mathbf U}) |_\cH \|   \le \| r({\mathbf U}) \| \\
 & = \sup_{z \in \partial_e \Omega } \{ |r(z) | \} \text{ (by the functional calculus for commuting normal operators)} \\
 & = \sup_{z \in \Omega} \{| r(z) | \} \text{ (by the definition of the distinguished boundary) }
 \end{align*}
 and it follows that ${\mathbf T}$ has $\Omega$ as a spectral set. The {\em rational dilation problem} asks:
 for a given domain $\Omega$, when is it the case that the converse direction holds, i.e., that
 $\Omega$ being a spectral set for ${\mathbf T}$ implies that ${\mathbf T}$ has an $\Omega$-unitary
 dilation?

 The problem can be reformulated in terms of $\Omega$-isometric lifts rather than $\Omega$-unitary dilations
 as follows.  For ${\mathbf V} = (V_1, \dots, V_d)$ a commutative operator tuple on a Hilbert space $\cK$,
 we say that ${\mathbf V}$ is an {\em $\Omega$-isometry} if there is an $\Omega$-unitary operator tuple
 ${\mathbf U} = (U_1, \dots, U_d)$ on a larger space $\widetilde \cK \supset \cK$ which extends
 ${\mathbf V}$, i.e., such that $V_j = U_j|_\cK$ for $j=1, \dots, d$.  Given a commutative operator tuple
 ${\mathbf T} = (T_1, \dots, T_d)$ on $\cH$, we say that ${\mathbf T}$ has an {\em $\Omega$-isometric lift}
 if there is an $\Omega$-isometry ${\mathbf V} = (V_1, \dots, V_d)$ on $\cK \supset \cH$ such that
 ${\mathbf V}^*$ extends ${\mathbf T}^*$:  $T_j^* = V_j^*|_\cH$ for each $j=1, \dots, d$.  If ${\mathbf U}$
on $\widetilde \cK$  is an $\Omega$-unitary dilation of ${\mathbf T}$ on $\cH$, we may define
 $$
 \cK = \overline{\operatorname{span}} \{ r({\mathbf U}) h \colon h \in \cH, \, r \in \operatorname{Rat}(\Omega) \}
 $$
 and then set ${\mathbf V} = {\mathbf U}|_\cK$.  For any $r \in \operatorname{Rat}(\Omega)$ and $h \in \cH$,
 we have
 $$
  P_{\cK \ominus \cH} r({\mathbf U}) h = r({\mathbf U}) h - r({\mathbf T}) h = r({\mathbf V}) h - r({\mathbf T}) h
 $$
 and hence we see that $\cK \ominus \cH$ can be taken to have the form
 $$
   \cK \ominus \cH = \overline{\operatorname{span}} \{ r({\mathbf V}) h - r({\mathbf T}) h \colon h \in \cH, \, r \in
   \operatorname{Rat}(\Omega) \}.
 $$
 Then the computation, for $r, q \in \operatorname{Rat}(\Omega)$ and $h, h' \in \cH$,
 \begin{align*}
 & \langle q({\mathbf V}) (r({\mathbf V}) - r({\mathbf T})) h, h' \rangle_\cK
  = \langle q({\mathbf U}) ( r({\mathbf U}) - r({\mathbf T})) h, h' \rangle_{\widetilde \cK}  \\
 &  \quad \quad  = \langle q({\mathbf T}) (r({\mathbf T}) - r({\mathbf T}) ) h, h' \rangle_\cH = 0
\end{align*}
shows that $\cK \ominus \cH$ is invariant under $q(V)$ for any $q \in \operatorname{Rat}(\Omega)$, i.e.,
that $\cH$ is invariant under $q(V)^*$  for any $q \in \operatorname{Rat}(\Omega)$.  Once this is established
the next computation, for $h, h' \in \cH$ and $q \in \operatorname{Rat}(\Omega)$,
$$
 \langle q({\mathbf V})^* h, h' \rangle_\cH = \langle h, q({\mathbf V}) h' \rangle_\cK =
 \langle h, q({\mathbf U}) h' \rangle_{\widetilde \cK} = \langle h, q({\mathbf T}) h'  \rangle_\cH
 = \langle q({\mathbf T})^* h, h' \rangle_\cH
 $$
 shows that $q({\mathbf V})^*|_\cH = q({\mathbf T})^*$ for all $q \in \operatorname{Rat}(\Omega)$, and we conclude
 that ${\mathbf V}$ is an $\Omega$-isometric  lift of ${\mathbf T}$.
 Conversely, if ${\mathbf T}$ on $\cH$ has a $\Omega$-isometric lift ${\mathbf V}$ on $\cK \supset \cH$,
 by definition of $\Omega$-isometry it follows that ${\mathbf V}$ in turn has an $\Omega$-unitary extension
 ${\mathbf U}$ on $\widetilde \cK \supset \cK$.  It is now a simple check to see that ${\mathbf U}$ serves as an
 $\Omega$-unitary dilation of ${\mathbf T}$.

 Let us say that {\em rational dilation holds for $\Omega$} if any commutative operator tuple ${\mathbf T} =
 (T_1, \dots, T_d)$ having $\Omega$ as a spectral set has a $\Omega$-unitary dilation.  By the discussion above,
 an equivalent formulation is that rational dilation holds for $\Omega$ if any commutative operator tuple ${\mathbf T} =
 (T_1, \dots, T_d)$ having $\Omega$ as a spectral set has a $\Omega$-isometric lift.  In the present paper
 we shall work with the $\Omega$-isometric-lift formulation rather than the  $\Omega$-unitary-dilation
 formulation.

 The theorem of
 Sz.-Nagy says that rational dilation holds for the unit disk ${\mathbb D}$. For single-variable
 domains $\Omega \subset {\mathbb C}$, it is known that rational dilation holds if $\Omega$ is a singly or
 doubly connected domain \cite{agler-ann}, but can fail if $\Omega$ contains three or more holes
 \cite{AHR, DritMcCu}.
 For domains contained
 in higher-dimensional Euclidean space, the And\^o dilation theorem \cite{ando}
 says that rational dilation holds for the bidisk ${\mathbb D^2}$, but the well-known result of Varopoulos
 \cite{Varopoulos}
 tells us that rational dilation can fail for polydisks ${\mathbb D}^d$ of dimension $d \ge 3$.

 With original motivation coming from the problem of $\mu$-synthesis in robust control (see \cite{DP}),
 these issues have been addressed for other types of domains in ${\mathbb C}^2$ and ${\mathbb C}^3$:
 specifically, the symmetrized bidisk
 $$
 \Gamma = \{ (s, p) \in {\mathbb C}^2 \colon s = (\lambda_1 + \lambda_2), \, p = \lambda_1 \lambda_2 \text{ for some }
 (\lambda_1, \lambda_2) \in {\mathbb D}^2 \},
 $$
 and a domain in ${\mathbb C}^3$ called the {\em tetrablock} and denoted by ${\mathbb E}$:
 \begin{align*}
  {\mathbb E} =  & \{ (x_1, x_2, x_3) \in {\mathbb C^3} \colon \exists \, A = \begin{bmatrix} a_{11} & a_{12}
  \\ a_{21} & a_{22} \end{bmatrix} \in {\mathbb C}^{2 \times 2}   \\
  & \text{with } \| A \| < 1 \text{ and } x_1 = a_{11}, \,
  x_2 = a_{22}, \, x_3 = \det A \}.
 \end{align*}
 It was shown some time ago that rational dilation does hold for the symmetrized bidisk \cite{AY},  and
 current conventional
 wisdom is that rational dilation in general fails for the tetrablock in view of the work in \cite{Spal}.
 The strategy of \cite{Spal} was to identify
 some necessary conditions for a tetrablock contraction to have a tetrablock-isometric lift and then produce a
 a concrete tetrablock contraction which violates a particular one of these supposed necessary conditions.
 However we here present a tetrablock contraction which does have a tetrablock-unitary dilation and at the same time
 violates this supposed necessary condition,
 thus showing that this supposed necessary condition is not necessary after all.  We also present some
 alternative necessary conditions for existence of a tetrablock-isometric lift, but have not been able to
 produce an example of a tetrablock contraction which violates any of these alternative necessary conditions.
 Until additional progress is made, it appears that at present whether rational dilation holds for the tetrablock is
 an open question.

 \section{The rational dilation problem for tetrablock contractions: necessary conditions and sufficient conditions}

As a matter of notation, for any Hilbert-space contraction operator $X$, we let $D_X : = (I - X^* X )^{\frac{1}{2}}$
denote the {\em defect operator} of $X$, and we let $\cD_X = \overline{\operatorname{Ran}}\, D_X$
denote the closure of the range of $D_X$.

A major breakthrough on the structure  of tetrablock contractions was the discovery of the
basic invariant called the {\em fundamental operators} for the tetrablock contraction.

\begin{thm}  {\rm (See  \cite[Section 4]{Bhat-Tet}.)} \label{Thm:BhatTet}
Let $(T_1,T_2,T)$ be a tetrablock contraction on a Hilbert space $\cH$. Then there exist unique operators
$F_1,F_2$ in $\cB(\cD_T)$ such that for all $z\in\overline{\mathbb{D}}$, the numerical radius of
$F_1+zF_2$ is at most one and
\begin{align}\label{BhatFund}
T_1-T_2^*T=D_TF_1D_T \text{ and }T_2-T_1^*T=D_TF_2D_T.
\end{align}
Moreover, $F_1,F_2$ are the only bounded linear operators in $\cB(\cD_P)$ that satisfy
\begin{align}\label{FundRel}
\begin{cases}
D_T T_1=F_1D_T+F_1^*D_TT\\
D_T T_2=F_2D_T+F_1^*D_TT.
\end{cases}
\end{align}
and are referred to as the {\em fundamental operators} for the tetrablock contraction  $(T_1,T_2,T)$.
\end{thm}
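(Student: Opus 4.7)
The plan is to prove the theorem in three stages mirroring its assertions: (a) existence of $F_1,F_2\in\cB(\cD_T)$ satisfying (\ref{BhatFund}); (b) the numerical-radius bound $w(F_1+zF_2)\le 1$ for $z\in\overline{\mathbb D}$; and (c) uniqueness together with the equivalent characterization (\ref{FundRel}).

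For (a), the strategy is to use the spectral-set hypothesis on $(T_1,T_2,T)$, together with the explicit inequalities describing $\overline{\mathbb E}$, to derive operator inequalities that make Douglas' factorization lemma applicable. The Schwarz-Pick type description of $\overline{\mathbb E}$ asserts that $(x_1,x_2,x_3)\in\overline{\mathbb E}$ iff $|x_3|\le 1$ and a certain quadratic inequality holds among $x_1-\overline{x_2}x_3$, $x_2-\overline{x_1}x_3$, and $1-|x_3|^2$. Passing this scalar inequality through the functional calculus should yield operator inequalities of the form
\begin{align*}
(T_j-T_{3-j}^*T)(T_j-T_{3-j}^*T)^* &\le D_T^2, \\
(T_j-T_{3-j}^*T)^*(T_j-T_{3-j}^*T) &\le D_T^2 \qquad (j=1,2),
\end{align*}
whereupon Douglas' lemma applied from both sides produces $F_1,F_2\in\cB(\cD_T)$ realizing (\ref{BhatFund}).

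For (b), the numerical-radius bound is more delicate than a mere norm bound and calls for a finer family of test quantities: applying the spectral-set condition to scalar functions on $\overline{\mathbb E}$ that encode the real part of $x_1+z x_2$ rather than its modulus, and transferring the resulting operator numerical-range inequality across the identity $D_T(F_1+zF_2)D_T=(T_1-T_2^*T)+z(T_2-T_1^*T)$ obtained from (\ref{BhatFund}), should give $w(F_1+zF_2)\le 1$ on $\cD_T$. For (c), uniqueness in (\ref{BhatFund}) is immediate from the density of $\operatorname{Ran} D_T$ in $\cD_T=\overline{\operatorname{Ran}}\,D_T$ together with the injectivity of $D_T|_{\cD_T}$. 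To derive (\ref{FundRel}), multiply (\ref{BhatFund}) by $D_T$ on the left, rewrite $D_T T_{3-j}^*T$ using the commutation $T_j T=T T_j$ and $I-T^*T=D_T^2$, and collect terms to arrive at an identity of the form $D_T^2\bigl(D_T T_j-F_j D_T-F_{3-j}^*D_T T\bigr)=0$; since the inner bracket lies in $\cD_T=(\ker D_T)^\perp$, it must vanish, producing (\ref{FundRel}). Uniqueness within (\ref{FundRel}) then follows by running the cancellation argument on both equations of the system simultaneously.

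The main obstacle lies in step (a): one must identify exactly which scalar test functions (or, equivalently, which geometric description of $\overline{\mathbb E}$) yield an operator inequality simultaneously strong enough to allow Douglas factorization \emph{on both sides}---controlling both the range and the range of the adjoint by $D_T$---and refined enough to support the sharper numerical-radius statement of step (b). This is the technical heart of \cite{Bhat-Tet} and is where I expect the bulk of the work.
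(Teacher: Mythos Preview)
The paper does not supply a proof of this theorem at all: Theorem~\ref{Thm:BhatTet} is stated with the attribution ``(See \cite[Section 4]{Bhat-Tet})'' and the paper immediately moves on to the next cited result, Theorem~\ref{TetIsoCharc}. There is therefore nothing in the present paper to compare your proposal against; the result is imported wholesale from Bhattacharyya's paper \cite{Bhat-Tet}, and your outline is in effect a sketch of what that external reference does.

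With that caveat, your plan is broadly the correct one for reproducing the argument of \cite{Bhat-Tet}: the Douglas factorization lemma applied to the two-sided operator inequality coming from the scalar description of $\overline{\mathbb E}$ is exactly how the $F_j$ are produced there, and the numerical-radius bound does indeed come from a one-parameter family of real-part test inequalities rather than a single norm bound. Two small cautions. First, in your step~(c) the algebra is a bit optimistic: multiplying \eqref{BhatFund} on the left by $D_T$ and invoking $T_jT=TT_j$ does not by itself collapse to $D_T^2\bigl(D_TT_j-F_jD_T-F_{3-j}^*D_TT\bigr)=0$; one has to first take adjoints of the companion equation and substitute, and the cancellation that finally yields \eqref{FundRel} uses that the relevant vectors already lie in $\cD_T$ so that one factor of $D_T$ (not $D_T^2$) can be peeled off. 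Second, note that the displayed system \eqref{FundRel} in the paper contains a typo (the first line should read $F_2^*D_TT$, not $F_1^*D_TT$); your sketch with the index $3-j$ is the correct symmetric form.
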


The following result gives several equivalent more convenient characterizations of tetrablock isometries.

\begin{thm}  (See \cite[Theorem 5.7]{Bhat-Tet}.)  \label{TetIsoCharc}
Let $(V_1,V_2,V)$ be a triple of commuting contractions on a Hilbert space. Then the following are equivalent:
\begin{enumerate}
  \item $(V_1,V_2,V)$ is a tetrablock-isometry;
  \item $(V_1,V_2,V)$ is a tetrablock contraction with $V$ an isometry;
  \item $V_1=V_2^*V$, $V_2$ is a contraction, and $V$ is an isometry;
  \item $V_1=V_2^*V$, the spectral radii of $V_1$ and $V_2$ are at most one and $V$ is an isometry.
\end{enumerate}
\end{thm}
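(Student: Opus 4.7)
\emph{Proof plan.} I would prove the cycle $(1)\Rightarrow(2)\Rightarrow(3)\Rightarrow(4)\Rightarrow(1)$; the first three implications are essentially formal, and the heart of the argument lies in $(4)\Rightarrow(1)$.

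For $(1)\Rightarrow(2)$, by definition a tetrablock isometry $(V_1,V_2,V)$ on $\cK$ extends to a tetrablock unitary $(U_1,U_2,U)$ on some $\widetilde{\cK}\supset\cK$; since the third coordinate has modulus one on the distinguished boundary of $\mathbb E$, the normal operator $U$ is unitary, and hence $V=U|_\cK$ is an isometry. The identity $r(U_1,U_2,U)|_\cK=r(V_1,V_2,V)$ for $r\in\operatorname{Rat}(\mathbb E)$ then supplies the spectral-set inequality, so $(V_1,V_2,V)$ is a tetrablock contraction. For $(2)\Rightarrow(3)$ I would invoke Theorem \ref{Thm:BhatTet}: because $V$ is an isometry, $D_V=0$, whence $V_1-V_2^*V=D_VF_1D_V=0$. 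The implication $(3)\Rightarrow(4)$ is trivial since the spectral radius is dominated by the operator norm.

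For $(4)\Rightarrow(1)$ I would first use the commutation relation $V_1V_2=V_2V_1$ together with $V_1=V_2^*V$ to deduce that $V_2$ is normal: these identities give $(V_2^*V_2-V_2V_2^*)V=0$, and the injectivity of the isometry $V$ yields $V_2^*V_2=V_2V_2^*$. In particular $\|V_2\|$ equals the spectral radius of $V_2$ and is therefore at most one, so $V_2$ is a contraction. The Fuglede theorem applied to $VV_2=V_2V$ also shows that $V_2^*$ commutes with $V$.

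With this preparation, let $U$ on $\widetilde{\cK}=\bigvee_{n\ge 0}U^{-n}\cK$ be the minimal unitary extension of the isometry $V$. I would define an extension $U_2$ of $V_2$ on the dense subspace $\bigcup_{n\ge 0}U^{-n}\cK$ by the rule
\[
U_2\bigl(U^{-n}x\bigr):=U^{-n}V_2 x \qquad (x\in\cK,\ n\ge 0);
\]
well-definedness, the bound $\|U_2\|\le\|V_2\|\le 1$, and the commutation $U_2 U=U U_2$ follow by routine checks using $V_2V=VV_2$. The key delicate step is to verify that $U_2$ is normal: defining a parallel extension $\widetilde{V_2^{\,*}}$ of $V_2^*$ (valid because $V_2^*$ also commutes with $V$), one shows by pairing against elements of the dense subspace that $\widetilde{V_2^{\,*}}=(U_2)^*$, and then $U_2(U_2)^*=(U_2)^*U_2$ is inherited directly from $V_2V_2^*=V_2^*V_2$. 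Setting $U_1:=U_2^*U$ produces a commuting triple of normal operators whose joint spectrum lies in $\{(x_1,x_2,x_3):|x_3|=1,\ |x_2|\le 1,\ x_1=\overline{x}_2 x_3\}$, the distinguished boundary of $\mathbb E$. Thus $(U_1,U_2,U)$ is a tetrablock-unitary extension of $(V_1,V_2,V)$, establishing (1). The main obstacle throughout is verifying normality of the extension $U_2$, which rests on the availability of the Fuglede theorem; every other step reduces to routine manipulations.
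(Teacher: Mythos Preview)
The paper does not supply its own proof of this theorem; it simply quotes the result from \cite[Theorem~5.7]{Bhat-Tet}. So there is no ``paper's proof'' to compare against, and your proposal must be judged on its own merits.

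Your argument for $(1)\Rightarrow(2)\Rightarrow(3)\Rightarrow(4)$ is fine. The gap is in $(4)\Rightarrow(1)$, specifically in the claim that $V_2$ is normal. You correctly derive $(V_2^*V_2-V_2V_2^*)V=0$, but then write ``the injectivity of the isometry $V$ yields $V_2^*V_2=V_2V_2^*$.'' This is a confusion of injectivity with surjectivity: the relation $AV=0$ only forces $A$ to vanish on $\operatorname{Ran}V$, not on all of $\cK$, and for a non-unitary isometry $\operatorname{Ran}V$ is a proper subspace. In fact the normality claim is false in general. Take $\cK=H^2$, $V=V_2=M_z$ the unilateral shift, and $V_1=I$. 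Then $(V_1,V_2,V)=(I,M_z,M_z)$ is a commuting triple of contractions, $V$ is an isometry, $V_1=V_2^*V$ since $M_z^*M_z=I$, and both spectral radii are $1$; so condition~(4) holds. But $V_2=M_z$ is certainly not normal. Consequently your invocation of Fuglede's theorem (which requires $V_2$ normal) also collapses, and with it the construction of the normal extension $U_2$.

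The standard route to $(4)\Rightarrow(1)$ --- as in \cite{Bhat-Tet} --- does not attempt to make $V_2$ normal on $\cK$. One instead takes the Wold decomposition of the isometry $V$, handles the unitary part directly (there the argument you sketched does work, since $V$ is then surjective), and on the pure-shift part exhibits an explicit model for $(V_1,V_2,V)$ as multiplication operators on a vector-valued $H^2$, from which the tetrablock-unitary extension is read off. You will need to replace the normality shortcut with an argument of this type.
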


\subsection{Necessary conditions for existence of a tetrablock-isometric lift}

\begin{proposition}
Let $(T_1,T_2,T)$ be a tetrablock contraction and $F_1,F_2$ in $\cB(\cD_T)$ be its fundamental operators. Each
of the following conditions is necessary for $(T_1,T_2,T)$ to have a tetrablock-isometric lift:
\begin{enumerate}
  \item The pair $(F_1,F_2)$ has a joint Halmos dilation to a commuting subnormal pair $(S_1,S_2)$,
  i.e., there exists an isometric embedding $\Lambda$ of  $\cD_T$ into a larger space $\cF$ so that
  $F_i = \Lambda^* S_i \Lambda$ for $i  = 1,2$ where  $(S_1, S_2)$ can be extended to a commuting
  normal pair $(N_1, N_2)$ with  joint spectrum $\sigma(N_1,N_2)$ contained in the unuion of 2-tori
  $\{(z_1,z_2):|z_1|=|z_2|\leq 1\}$.

  \item $(F_1^* D_T T_1 - F_2^* D_T T_2)|_{\operatorname{Ker}D_T}=0$;

  \item $(F_1^* F_2^* - F_2^* F_1^*)D_T T|_{\operatorname{Ker}D_T}=0$.
\end{enumerate}
\end{proposition}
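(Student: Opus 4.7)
The plan is to fix a tetrablock-isometric lift $(V_1,V_2,V)$ of $(T_1,T_2,T)$ on $\cK\supset\cH$, extract a block-matrix structure from the lifting relation $T_j^*=V_j^*|_\cH$, and then read each of (1)--(3) off a different piece of that structure. Setting $\cN:=\cK\ominus\cH$, the lifting condition makes $\cN$ invariant under $V$ and each $V_j$, so with respect to $\cK=\cH\oplus\cN$ the three operators are lower-triangular:
$$V=\begin{pmatrix}T & 0\\ C & Y\end{pmatrix},\qquad V_j=\begin{pmatrix}T_j & 0\\ C_j & Y_j\end{pmatrix}\quad (j=1,2).$$
The isometry relation $V^*V=I$ forces $C=WD_T$ for an isometric embedding $W\colon\cD_T\hookrightarrow\cN$ whose range is orthogonal to $\operatorname{ran} Y$. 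Using the characterization $V_1=V_2^*V$ from Theorem \ref{TetIsoCharc}(3), together with its companion $V_2=V_1^*V$ (which holds because $V$ is an isometry commuting with $V_2$), and matching the $(1,1)$- and $(2,1)$-entries, I would derive
$$C_1=Y_2^*WD_T,\quad C_2=Y_1^*WD_T,\quad F_1=W^*Y_1W,\quad F_2=W^*Y_2W.$$
The identification of the fundamental operators as compressions $W^*Y_jW$ is the main subtle step: I would verify the defining identities (\ref{FundRel}) for these candidates (they come out of the commutations $VV_j=V_jV$ after compressing by $W^*$ and using $W^*W=I_{\cD_T}$ together with $W^*Y=0$), and then invoke the uniqueness clause of Theorem \ref{Thm:BhatTet}.

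Condition (1) then drops out quickly. By definition, $(V_1,V_2,V)$ extends to a tetrablock-unitary triple $(U_1,U_2,U)$ on $\widetilde\cK\supset\cK$; since $\cK$ is $U_j$-invariant with $U_j|_\cK=V_j$ and $\cN\subset\cK$ is $V_j$-invariant, $\cN$ is itself $U_j$-invariant, whence $Y_j=U_j|_\cN$. Thus $(Y_1,Y_2)$ is a commuting subnormal pair with joint normal extension $(U_1,U_2)$ whose joint spectrum lies in the image of the distinguished boundary $\partial_e\mathbb{E}=\{(x_1,x_2,x_3):|x_3|=1,\ x_1=\overline{x_2}x_3,\ |x_2|\le 1\}$ under projection to the first two coordinates, namely $\{(z_1,z_2):|z_1|=|z_2|\le 1\}$. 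Taking $\Lambda:=W$ and $(S_1,S_2):=(Y_1,Y_2)$ verifies (1).

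For (2), the $(2,1)$-entry of the identity $V_1V_2=V_2V_1$ reads $C_1T_2+Y_1C_2=C_2T_1+Y_2C_1$; substituting the formulas for $C_1,C_2$ above and compressing by $W^*$ on the left yields
$$F_2^*D_TT_2-F_1^*D_TT_1=W^*(Y_2Y_2^*-Y_1Y_1^*)WD_T,$$
whose right-hand side annihilates $\ker D_T$, proving (2). For (3), the identities (\ref{FundRel}) give, on $\ker D_T$, $D_TT_1=F_2^*D_TT$ and $D_TT_2=F_1^*D_TT$; substituting into the left side of (2) converts it into $(F_1^*F_2^*-F_2^*F_1^*)D_TT$. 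Hence (2) and (3) are equivalent, and both are consequences of the existence of the tetrablock-isometric lift.
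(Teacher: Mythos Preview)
Your proposal is correct and follows essentially the same route as the paper: the same block decomposition $\cK=\cH\oplus(\cK\ominus\cH)$, the same isometric embedding of $\cD_T$ coming from $V^*V=I$, the same use of $V_1=V_2^*V$ and $V_2=V_1^*V$ to pin down $C_1,C_2$, and the same $(2,1)$-entry of $V_1V_2=V_2V_1$ for item~(2), followed by substitution of \eqref{FundRel} for item~(3).

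The one place you diverge is in identifying $F_j=W^*Y_jW$. The paper does this directly from the $(1,1)$-entry of $V_j=V_{3-j}^*V$: one gets $T_1-T_2^*T=C_2^*C=C^*Y_1C=D_T(W^*Y_1W)D_T$, which matches \eqref{BhatFund} immediately, and uniqueness finishes. Your plan instead verifies \eqref{FundRel} by compressing the $(2,1)$-entry of $VV_j=V_jV$ with $W^*$; this works (and the extra ingredient $W^*Y=0$ that you need does follow from the off-diagonal of $V^*V=I$), but it is a slightly longer path to the same identification. Either way the argument is sound.
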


\begin{proof}
Let $(V_1,V_2,V)$ on $\cK$ be a tetrablock-isometric lift of $(T_1,T_2,T)$.  It is known that the tetrablock is
polynomially convex \cite{awy}. Hence it suffices to work with polynomials rather than the full algebra
$\operatorname{Rat}({\mathbb E})$ and without loss of generality we can assume that
\begin{align}\label{MinApace}
\cK=\overline{\operatorname{span}}\{V_1^{m_1}V_2^{m_2}V^mh:h\in\cH\,\text{ and }m_1,m_2,m\geq0\},
\end{align} and hence that
\begin{align}\label{restriction}
(V_1^*,V_2^*,V^*)|_{\cH}=(T_1^*,T_2^*,T^*).
\end{align}
With respect to the decomposition $\cK=\cH\oplus (\cK\ominus\cH)$ let $V_1,V_2$ and $V$ have the $2\times 2$ block operator matrix given by
\begin{align}\label{Blocks}
V_j=\begin{bmatrix}
      T_j & 0 \\
      C_j & S_j
    \end{bmatrix} \text{ for }j=1,2 \text{ and }
    V=\begin{bmatrix}
      T & 0 \\
      C & S
    \end{bmatrix}.
\end{align}
We read off from $V$ being an isometry that the entries $T, S$ in its  $2\times 2$ block-operator matrix decomposition
 satisfy  $T^*T+C^*C=I_{\cH}$.  This in turn readily implies that there exists an isometry $\Lambda:\cD_T\to\cH^\perp$ such that
\begin{align}\label{L}
\Lambda D_T=C.
\end{align}
Since $(V_1,V_2,V)$ on $\cK$ is a tetrablock isometry, part (3) of Theorem \ref{TetIsoCharc} tells us
 $V_1=V_2^*V$. By the block operator-matrix representations (\ref{Blocks}), this in turn translates to
\begin{align*}
\begin{bmatrix}
      T_1 & 0 \\
      C_1 & S_1
    \end{bmatrix} =\begin{bmatrix}
      T_2^* & C_2^* \\
      0 & S_2^*
    \end{bmatrix} \begin{bmatrix}
      T & 0 \\
      C & S
    \end{bmatrix}
    =\begin{bmatrix}
      T_2^*T+C_2^*C & C_2^*S \\
      S_2^*C & S_2^*S
    \end{bmatrix}
\end{align*}
which further implies that
\begin{align}\label{Eqn1}
  T_1-T_2^*T=C_2^*C, \quad C_2^*S=0\quad  \text{and}\quad C_1=S_2^*C.
\end{align}
Multiply the equation $V_1=V_2^*V$ from part (3) of  Theorem \ref{TetIsoCharc} on the left by $V^*$, use that
$V$ commutes with $V_1$ and that $V$ is an isometry, and then take adjoints to get
$$
V_2=V_1^*V.
$$
A similar computation gives the identity formally obtained from \eqref{Eqn1} by interchanging indices:
\begin{align}\label{Eqn2}
  T_2-T_1^*T=C_1^*C, \quad C_1^*S=0\quad  \text{and}\quad C_2=S_1^*C.
\end{align}

Now we have all the information needed to prove part (1). By the first equation in (\ref{Eqn1}) and the last equation
 in (\ref{Eqn2}) we have
\begin{align}   \label{proof1}
D_TF_1D_T=T_1-T_2^*T=C_2^*C=C^*S_1C=D_T\Lambda^*S_1\Lambda D_T \text{ (by \eqref{L}).}
\end{align}
By the uniqueness of the fundamental operators we have $F_1=\Lambda^*S_1\Lambda $. A similar computation shows
that $F_2=\Lambda^*S_2\Lambda $. Note that the triple $(S_1,S_2,S)$ is a tetrablock isometry, since
$(S_1,S_2,S)=(V_1,V_2,V)|_{\cH^\perp}$ and $(V_1,V_2,V)$ is a tetrablock isometry.
Hence $(S_1, S_2, S)$ has a tetrablock unitary extension, say
$(N_1, N_2, N)$.  By definition of a tetrablock unitary, the Taylor joint spectrum of $(N_1, N_2, N)$ is contained
in the distinguished boundary of the tetrablock.  By Theorem 7.1 in \cite{AHR}, this distinguished boundary  is
consists of the seet  $\{ (x_1, x_2, x_3) \in {\mathbb C}^3  \colon  x_1 = \overline{x_2} x_3, \, |x_2| \le 1, \, |x_3| = 1\}$.
By now ignoring the third component, we see that the joint spectrum of the commuting normal pair $(N_1, N_2)$ is contained in the union of $2$-tori $\{ (z_1, z_2) \colon |z_1| = |z_2| \le 1\}$, and item (1) follows.

Since $V_1$ and $V_2$ commute, we may equate the $(2,1)$-entry of $V_1 V_2$ with the
$(2,1)$-entry of $V_2 V_1$ to arrive at
\begin{equation} \label{Eqn3}
 C_1T_2+S_1C_2=C_2T_1+S_2C_1.
\end{equation}
After rearranging terms and using the last equations in (\ref{Eqn1}) and (\ref{Eqn2}),  we get
\begin{align*}
  S_1^*CT_1-S_2^*CT_2=(S_1S_1^*-S_2S_2^*)C.
\end{align*}
After multiplying by $\Lambda^*$ on the left and using (\ref{L}), we get
\begin{align}\label{proof2}
F_1^*D_TT_1-F_2^*D_TT_2=\Lambda^*(S_1S_1^*-S_2S_2^*)\Lambda D_T,
\end{align}
This completes the proof of  item (2).

Finally, we invoke equations (\ref{FundRel}) to get that
\begin{align}\label{proof3}
\nonumber \Lambda^*(S_1S_1^*-S_2S_2^*)\Lambda D_T&=F_1^*D_TT_1-F_2^*D_TT_2\\
\nonumber&=F_1^*(F_1D_T+F_2^*D_TT)-F_2^*(F_2D_T+F_1^*D_TT)\\
&=(F_1^*F_1-F_2^*F_2)D_T+(F_1^*F_2^*-F_2^*F_1^*)D_TT.
\end{align}This not only establishes item (3) but also shows that item (2) and item (3) are equivalent.
\end{proof}

\subsection{Sufficient conditions for the existence of an tetrablock-isometric lift}

The following set of sufficient conditions for rational dilation of a tetrablock contraction was obtained
by Bhattacharyya.

\begin{thm}  \label{T:Bhat-suf} {\rm (See \cite[Theorem 6.1]{Bhat-Tet}.)}
Let $(T_1, T_2, T)$ be a tetrablock contraction with fundamental operators $F_1$ and $F_2$.  Then a
sufficient condition for $(T_1, T_2, T)$ to have a tetrablock-isometric lift is that
\begin{enumerate}
\item $F_1 F_2 = F_2 F_1$, and
\item $F_1^* F_1 - F_1 F_1^* = F_2^* F_2 - F_2 F_2^*$.
\end{enumerate}
\end{thm}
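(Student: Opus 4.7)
The plan is to construct an explicit tetrablock-isometric lift $(V_1, V_2, V)$ of $(T_1, T_2, T)$ on $\cK := \cH \oplus H^2(\cD_T)$, by coupling the Sz.-Nagy--Sch\"{a}ffer minimal isometric dilation of $T$ with analytic operator-valued Toeplitz operators whose symbols are built from the fundamental operators $F_1, F_2$. Letting $W_0 \colon \cD_T \hookrightarrow H^2(\cD_T)$ denote the embedding as constant functions, I would take
\begin{align*}
V = \sbm{T & 0 \\ W_0 D_T & M_z}, \quad V_2 = \sbm{T_2 & 0 \\ W_0 F_1^* D_T & T_{F_2 + zF_1^*}}, \quad V_1 \bydef V_2^* V,
\end{align*}
where $T_{F_2 + zF_1^*}$ is the analytic Toeplitz operator on $H^2(\cD_T)$ with the indicated linear symbol. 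The defining identity $T_1 - T_2^* T = D_T F_1 D_T$ then forces $V_1$ to take the mirror-image form $V_1 = \sbm{T_1 & 0 \\ W_0 F_2^* D_T & T_{F_1 + zF_2^*}}$, obtained from $V_2$ by swapping the indices $1 \leftrightarrow 2$. The upper-triangular block form immediately yields $V_j^*|_\cH = T_j^*$ for $j = 1, 2$ and $V^*|_\cH = T^*$, so $(V_1, V_2, V)$ is automatically a lift of $(T_1, T_2, T)$.

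Next I would verify that $(V_1, V_2, V)$ is a commuting triple. The cross-commutations $V V_j = V_j V$ for $j = 1, 2$ reduce on the $(2,1)$-block precisely to the two fundamental relations \eqref{FundRel}, and so require no new hypothesis. The key commutation $V_1 V_2 = V_2 V_1$ is where both (1) and (2) enter. On the $(2,2)$-block, since analytic Toeplitz composition coincides with symbol multiplication, the identity reduces to
$$
(F_1 + zF_2^*)(F_2 + zF_1^*) = (F_2 + zF_1^*)(F_1 + zF_2^*),
$$
and matching coefficients of $1, z, z^2$ in this operator-valued polynomial identity extracts precisely $F_1 F_2 = F_2 F_1$ (from the constant and $z^2$ terms) together with $F_1^* F_1 - F_1 F_1^* = F_2^* F_2 - F_2 F_2^*$ (from the $z$-term). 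The $(2,1)$-block commutation then reduces, after one further application of \eqref{FundRel}, to these same two identities.

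It then remains to apply characterization (3) of Theorem \ref{TetIsoCharc}. Since $V_1 = V_2^* V$ by construction and $V$ is a Sz.-Nagy--Sch\"{a}ffer isometry, what must still be checked is that $V_2$ is a contraction. This is the main technical obstacle. The block structure reduces contractivity of $V_2$ to the $H^\infty$-norm bound $\sup_{|z|=1} \|F_2 + z F_1^*\| \leq 1$ on the Toeplitz symbol, which must be extracted from the numerical-radius condition $w(F_1 + zF_2) \leq 1$ for $z \in \overline{\mathbb D}$ supplied by Theorem \ref{Thm:BhatTet}, in conjunction with the structural hypotheses (1) and (2); the na\"{i}ve inequality $\|A\| \leq 2 w(A)$ is not tight enough, so a separate argument is required. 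I would expect this bound to rest on an independent ingredient from \cite{Bhat-Tet}---most naturally, on the assertion that $(T_{F_1 + zF_2^*}, T_{F_2 + zF_1^*}, M_z)$ is itself a ``model'' tetrablock isometry on $H^2(\cD_T)$ precisely under the balanced commutativity and defect hypotheses (1) and (2). Once this contractivity is in hand, Theorem \ref{TetIsoCharc}(3) applied to $(V_2^* V, V_2, V)$ yields that $(V_1, V_2, V)$ is a tetrablock isometry, completing the construction of the tetrablock-isometric lift.
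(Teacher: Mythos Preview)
The paper does not give a proof of this theorem: it is quoted from \cite[Theorem 6.1]{Bhat-Tet} without argument, so there is no paper-proof against which to compare. Your outline does reproduce Bhattacharyya's construction in \cite{Bhat-Tet}---the Sch\"affer-type lift on $\cH \oplus H^2(\cD_T)$ with Toeplitz symbols $F_j + z F_{3-j}^*$, together with the observation that hypotheses (1) and (2) are exactly what make the symbols commute.

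You are right to isolate the contractivity of $V_2$ (equivalently $\sup_{|z|=1}\|F_2 + zF_1^*\|\le 1$) as the crux, and right that it does not follow from $w(F_1+zF_2)\le 1$ alone. But your proposed route around it is circular: invoking that $(T_{F_1+zF_2^*},T_{F_2+zF_1^*},M_z)$ is already known to be a tetrablock isometry presupposes, via Theorem~\ref{TetIsoCharc}(3), precisely the contractivity you are trying to establish. In \cite{Bhat-Tet} this norm bound is proved directly as a separate lemma, combining the numerical-radius hypothesis with (1) and (2) through an operator-algebra computation; that lemma is the one ingredient your sketch defers rather than supplies. The rest of your architecture is sound.
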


There is a class of tetrablock contractions which always dilate, namely, those of the form $(T_1, T_2, T_1 T_2)$
with $(T_1, T_2)$ a pair of commuting contractions.  We begin with
 the following result which also appears in \cite{sauAndo} (Lemma 32).

\begin{thm}\label{T:thekeylem}
Let $T_1$ and $T_2$ be two commuting contractions on a Hilbert space $\mathcal H$ and $T=T_1T_2$. Then the triple $(T_1,T_2,T)$ is a tetrablock contraction on $\mathcal H$.
\end{thm}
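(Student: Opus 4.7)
The plan is to construct a tetrablock-unitary dilation of $(T_1, T_2, T_1T_2)$ directly by leveraging the And\^o dilation theorem, and then read off the spectral set property from the normal functional calculus together with the polynomial convexity of the tetrablock. This is more than is strictly asked (the statement only requires a spectral set inequality), but it is the most transparent route.

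\textbf{Step 1 (invoke And\^o).} Since $T_1, T_2$ are commuting contractions on $\mathcal H$, And\^o's theorem supplies commuting unitaries $U_1, U_2$ on some $\mathcal K \supset \mathcal H$ with $T_1^{m_1}T_2^{m_2} = P_\mathcal H U_1^{m_1}U_2^{m_2}|_\mathcal H$ for all $m_1,m_2 \ge 0$. Set $U = U_1U_2$. Then $(U_1,U_2,U)$ is a commuting triple of normal operators, and by the spectral mapping theorem for commuting normals its joint (Taylor) spectrum is contained in the set $\Sigma := \{(z_1,z_2,z_1z_2) : (z_1,z_2) \in \mathbb T^2\}$.

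\textbf{Step 2 (check distinguished boundary).} Using the description of $b\mathbb E$ from Theorem 7.1 of \cite{AHR} quoted in the proof of the previous Proposition, namely
\[
b\mathbb E = \{(x_1,x_2,x_3) \in \mathbb C^3 : x_1 = \overline{x_2}\,x_3,\ |x_2|\le 1,\ |x_3|=1\},
\]
I verify $\Sigma \subset b\mathbb E$: for $|z_1|=|z_2|=1$ we have $|z_2|\le 1$, $|z_1z_2|=1$, and $\overline{z_2}(z_1z_2) = z_1$. Hence $(U_1,U_2,U)$ is a tetrablock-unitary.

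\textbf{Step 3 (compression to $\mathcal H$).} For any polynomial $r(x_1,x_2,x_3) = \sum c_{m_1,m_2,m}\, x_1^{m_1}x_2^{m_2}x_3^m$, a direct calculation collapses both $r(T_1,T_2,T_1T_2)$ and $r(U_1,U_2,U_1U_2)$ to two-variable polynomials in the underlying commuting pairs, giving
\[
r(T_1,T_2,T_1T_2) = \sum c_{m_1,m_2,m}\,T_1^{m_1+m}T_2^{m_2+m} = P_\mathcal H\, r(U_1,U_2,U_1U_2)|_\mathcal H
\]
by the compression property of Step 1. Thus $(U_1,U_2,U)$ is an $\mathbb E$-unitary dilation of $(T_1,T_2,T_1T_2)$.

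\textbf{Step 4 (spectral set).} By polynomial convexity of $\mathbb E$ (cited as \cite{awy} in the paper), it suffices to check the spectral set inequality for polynomials. For any polynomial $r$,
\[
\|r(T_1,T_2,T_1T_2)\| \le \|r(U_1,U_2,U)\| = \sup_{z\in \sigma(U_1,U_2,U)}|r(z)| \le \sup_{z\in b\mathbb E}|r(z)| = \sup_{z\in \overline{\mathbb E}}|r(z)|,
\]
where the first equality uses the functional calculus for commuting normal tuples and the last equality is the definition of the distinguished boundary. Hence $\overline{\mathbb E}$ is a spectral set for $(T_1,T_2,T_1T_2)$, so $(T_1,T_2,T_1T_2)$ is a tetrablock contraction.

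\textbf{Anticipated obstacle.} There is no deep obstacle; the argument is really an application of And\^o. The only point requiring care is Step 2, the verification that the image torus $\Sigma$ actually sits inside $b\mathbb E$ rather than just inside $\overline{\mathbb E}$, but this reduces to a one-line computation once one has the explicit description of $b\mathbb E$ in hand.
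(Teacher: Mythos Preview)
Your proof is correct and rests on the same core idea as the paper's---And\^o's theorem applied to the pair $(T_1,T_2)$---but you take a longer path. The paper simply observes that the map $\pi(z_1,z_2)=(z_1,z_2,z_1z_2)$ carries $\mathbb D^2$ into $\mathbb E$, so for any polynomial $f$ the And\^o inequality gives $\|f(T_1,T_2,T_1T_2)\|=\|(f\circ\pi)(T_1,T_2)\|\le\|f\circ\pi\|_{\infty,\mathbb D^2}\le\|f\|_{\infty,\mathbb E}$ in one line, never mentioning dilations, the distinguished boundary, or normal functional calculus. Your route instead constructs the And\^o unitary dilation, checks that $(U_1,U_2,U_1U_2)$ is tetrablock-unitary via the explicit description of $b\mathbb E$, and then reads off the spectral inequality. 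The payoff of your approach is that Steps 1--3 already establish the content of the paper's next result (Theorem~\ref{T:pair-dilate}, existence of a tetrablock-isometric lift) for free; the cost is that you invoke more machinery than the bare statement requires.
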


\begin{proof}
The proof is a simple application of Ando's Theorem \cite{ando}. Define the map $\pi:\mathbb D\times \mathbb D\to \mathbb C^3$ by
$$\pi(z_1,z_2)=(z_1,z_2,z_1z_2).$$
Then by the definition of the tetrablock, it follows that Ran$(\pi) \subset  \mathbb E$. Now let $f$ be any polynomial in three variables. Now by Ando's theorem,
$$
\|f\circ \pi (T_1,T_2)\|\leq \|f\circ\pi\|_{\infty,  \mathbb D^2}\leq \|f\|_{\infty,  \mathbb E},
$$ which proves the lemma.
\end{proof}

It is now an application of the And\^o dilation theorem to see that any such tetrablock contraction
has a tetrablock-isometric lift as follows.

\begin{thm}  \label{T:pair-dilate}
For any pair $(T_1,T_2)$ of commuting contractions on a Hilbert space, the tetrablock contraction
$(T_1,T_2,T_1T_2)$ always has a tetrablock-isometric lift.
\end{thm}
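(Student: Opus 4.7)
The strategy is to invoke the And\^o dilation theorem and then verify the tetrablock-isometry conditions using the convenient characterization in Theorem \ref{TetIsoCharc}(3).

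First I would apply And\^o's theorem to the commuting contraction pair $(T_1,T_2)$ on $\cH$ to produce a pair of commuting isometries $(V_1,V_2)$ on a larger space $\cK\supset\cH$ that jointly lift $(T_1,T_2)$, i.e., $V_j^*|_\cH=T_j^*$ for $j=1,2$. I would then define the third component simply as $V=V_1V_2$. Because $V_1,V_2$ are commuting isometries, their product $V$ is again an isometry, and all three operators in the triple $(V_1,V_2,V)$ mutually commute.

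Next I would check that $(V_1,V_2,V)$ is a tetrablock isometry by verifying the three conditions of Theorem \ref{TetIsoCharc}(3). The identity $V_1=V_2^*V$ is immediate:
\[
V_2^*V=V_2^*V_1V_2=V_1V_2^*V_2=V_1,
\]
using commutativity of $V_1$ with $V_2$ (and hence with $V_2^*$ on the range of $V_2$ via the isometry relation $V_2^*V_2=I$). The operator $V_2$ is a contraction (in fact an isometry), and $V$ has already been noted to be an isometry, so Theorem \ref{TetIsoCharc} applies.

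Finally, I would verify that $(V_1,V_2,V)$ lifts $(T_1,T_2,T)$. The lift conditions on the first two coordinates are given by And\^o's theorem. For the third coordinate, since $T_1^*$ leaves $\cH$ invariant, for any $h\in\cH$ we have
\[
V^*h=V_2^*V_1^*h=V_2^*(T_1^*h)=T_2^*T_1^*h=(T_1T_2)^*h=T^*h,
\]
so $V^*|_\cH=T^*$. This gives the desired tetrablock-isometric lift of $(T_1,T_2,T_1T_2)$. There is no real obstacle; the only point requiring care is ensuring that the lift relation for the third coordinate follows from those for $T_1$ and $T_2$, which is transparent once one observes that $T_1^*\cH\subset\cH$.
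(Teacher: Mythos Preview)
Your proof is correct and follows essentially the same approach as the paper: apply And\^o's theorem to obtain a commuting isometric lift $(V_1,V_2)$, set $V=V_1V_2$, and verify via Theorem~\ref{TetIsoCharc}(3) that $(V_1,V_2,V)$ is a tetrablock-isometric lift of $(T_1,T_2,T_1T_2)$. One cosmetic remark: the identity $V_2^*V=V_1$ is more cleanly written as $V_2^*V_1V_2=V_2^*V_2V_1=V_1$, using only $V_1V_2=V_2V_1$ and $V_2^*V_2=I$, which avoids the somewhat delicate parenthetical about $V_1$ commuting with $V_2^*$ on $\operatorname{Ran}V_2$.
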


\begin{proof}
Let $(V_1,V_2)$ be an And\^o isometric lift for $(T_1,T_2)$. Then $(V_1,V_2, V_1V_2)$ is a triple of
commuting isometries which is a lifting of $(T_1,T_2,T_1T_2)$.  By condition (3) in
Theorem \ref{TetIsoCharc}, one sees that the triple $(V_1,V_2,V_1V_2)$ is a tetrablock isometry.
\end{proof}

\section{Tetrablock contractions with special structure}

To get more tractable examples to work with, in this section we consider tetrablock contractions $(T_1, T_2, T)$
where $T$ is a partial isometry.  For clarity of the results which follow, we assume only hypotheses needed to the
validity of the particular result at hand.

The following structure of partial isometries is well known. We omit the striaghtforward proof.
\begin{lemma}\label{Lem:PI}
Let $T$ be a contraction on a Hilbert space $\cH$. Then $T$ is a partial isometry if and only if there exists a decomposition
$\cH=\cH_1\oplus\cH_2$ such that
\begin{align}\label{PI}
T=\begin{bmatrix}
    Z & 0
  \end{bmatrix}:\cH_1\oplus\cH_2\to\cH
\end{align}for some isometry $Z:\cH_1\to\cH$.
\end{lemma}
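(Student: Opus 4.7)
The plan is to prove both directions directly from the standard definition: $T$ is a partial isometry if and only if $T$ restricted to $(\ker T)^\perp$ is an isometry into $\cH$ (equivalently, $T^{*}T$ is the orthogonal projection onto $(\ker T)^\perp$). With this in hand, the block decomposition essentially writes itself, so the entire content of the lemma is a matter of matching the internal kernel/co-kernel splitting with the external $\cH_1\oplus\cH_2$ splitting.

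For the forward direction, I would assume $T$ is a partial isometry and set $\cH_1 := (\ker T)^\perp$ and $\cH_2 := \ker T$, so that $\cH=\cH_1\oplus\cH_2$ is an orthogonal decomposition. Defining $Z := T|_{\cH_1}\colon \cH_1\to\cH$, the defining property of a partial isometry says exactly that $Z$ is an isometry. With respect to the decomposition, any vector of the form $h_1\oplus h_2$ satisfies $T(h_1\oplus h_2)=Zh_1$, which is precisely the row-form $T=\begin{bmatrix}Z & 0\end{bmatrix}$ claimed in \eqref{PI}.

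For the converse, I would assume $T$ has the form $T=\begin{bmatrix}Z & 0\end{bmatrix}$ with $Z\colon\cH_1\to\cH$ an isometry. Clearly $\cH_2\subseteq\ker T$. Conversely, if $h_1\oplus h_2\in\ker T$, then $Zh_1=0$, and the injectivity of $Z$ forces $h_1=0$; hence $\ker T=\cH_2$ and $(\ker T)^\perp=\cH_1$. The restriction $T|_{(\ker T)^\perp}=Z$ is an isometry, so $T$ is a partial isometry.

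There is no real obstacle to speak of; the whole argument is simply repackaging the definition of a partial isometry as a block matrix decomposition, and the only bookkeeping is confirming that the subspaces $\cH_1,\cH_2$ can be identified with $(\ker T)^\perp$ and $\ker T$, respectively. This is why the authors characterize the proof as straightforward and omit it.
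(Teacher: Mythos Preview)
Your proof is correct and is exactly the standard argument one would supply here; since the paper explicitly omits the proof as ``well known'' and ``striaghtforward,'' there is nothing further to compare.
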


In the next result we consider an operator-triple $(T_1, T_2, T)$  which satisfies only some of the properties of a
tetrablock contraction.

\begin{proposition}\label{P:varPal}
Let $T$ be a partial isometry and $(T_1,T_2)$ be a pair of contractions on $\cH$. Suppose there exist two operators $F_1,F_2$ in $\cB(\cD_T)$ such that
\begin{align}\label{FundEqns}
T_1-T_2^*T=D_TF_1D_T \text{ and }T_2-T_1^*T=D_TF_2D_T.
\end{align}
Then
\begin{enumerate}
\item[(1)]$\operatorname{Ker} T$ is jointly invariant under $(T_1,T_2)$ and,
\item[(2)] if we denote the restriction $(T_1,T_2)|_{\operatorname{Ker T}}$ by
$(D_1,D_2)$, then
\begin{enumerate}
\item[(a)] $F_1F_2=F_2F_1 \text{ if and only if }D_1D_2=D_2D_1$\text{ and}
\item[(b)] $F_1^*F_1-F_1F_1^*=F_2^*F_2-F_2F_2^* \text{ if and only if }D_1^*D_1-D_1D_1^*=D_2^*D_2-D_2D_2^*.$
\end{enumerate}
\end{enumerate}
\end{proposition}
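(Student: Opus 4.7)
The plan is to exploit the partial-isometry structure of $T$ to show that the fundamental operators $F_1,F_2$ are literally equal to the restrictions $D_1,D_2$, at which point items (2)(a) and (2)(b) become tautologies. This reduces everything to a block matrix computation.

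First, I would invoke Lemma \ref{Lem:PI} to write $\cH = \cH_1 \oplus \cH_2$ with $\cH_2 = \operatorname{Ker} T$ and
$T = \begin{bmatrix} Z & 0 \end{bmatrix}$ where $Z:\cH_1 \to \cH$ is an isometry. A quick calculation gives $T^*T = I_{\cH_1}\oplus 0$, so $D_T^2 = 0 \oplus I_{\cH_2}$, meaning $D_T$ is the orthogonal projection onto $\operatorname{Ker} T$ and $\cD_T = \operatorname{Ker} T = \cH_2$. Consequently, $D_T F_j D_T$ has the block form $\bigl[\begin{smallmatrix} 0 & 0 \\ 0 & F_j \end{smallmatrix}\bigr]$ on $\cH_1 \oplus \cH_2$.

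Next, I would write each $T_j$ in block form
$T_j = \bigl[\begin{smallmatrix} A_j & B_j \\ C_j & D_j \end{smallmatrix}\bigr]$, compute $T_2^*T$ and $T_1^*T$ (which have zero second block-column because $T$ does), and read off the $(1,2)$ and $(2,2)$ entries of the fundamental equations \eqref{FundEqns}. The $(1,2)$ entry of $T_1 - T_2^*T$ is $B_1$ and that of $D_T F_1 D_T$ is $0$, yielding $B_1 = 0$; the same argument gives $B_2 = 0$, which proves item (1). The $(2,2)$ entry of $T_1 - T_2^*T$ is $D_1$ and of $D_T F_1 D_T$ is $F_1$, so $F_1 = D_1$, and symmetrically $F_2 = D_2$.

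Once these identifications are in hand, items (2)(a) and (2)(b) follow immediately from substituting $F_j = D_j$ on both sides of each equivalence. There is no real obstacle here beyond bookkeeping; the only point that requires a moment of care is verifying that $T_2^*T$ and $T_1^*T$ really have vanishing second block-column, which is immediate from $T = \bigl[\begin{smallmatrix} Z_{11} & 0 \\ Z_{21} & 0 \end{smallmatrix}\bigr]$.
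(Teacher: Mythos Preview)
Your proposal is correct and follows essentially the same route as the paper: decompose $\cH$ via Lemma~\ref{Lem:PI}, identify $D_T$ with the projection onto $\operatorname{Ker}T$, write everything in $2\times 2$ block form, and read off $B_j=0$ and $F_j=D_j$ from the $(1,2)$ and $(2,2)$ entries of the fundamental equations. The only cosmetic difference is that the paper introduces intermediate operators $L_j$ on $\operatorname{Ker}T$ before identifying them with $D_j$, whereas you make the identification $\cD_T=\cH_2$ at the outset and work with $F_j$ directly.
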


\begin{proof}
We first observe that since $T$ is a partial isometry, by Lemma \ref{Lem:PI},
\begin{align}\label{DefectT}
D_T=\begin{bmatrix} I_{\operatorname{Ran} T^*} & 0 \\
    0 & I_{\operatorname{Ker}T} \end{bmatrix}
    -  \begin{bmatrix} I_{\operatorname{Ran} T^*} & 0 \\
    0 & 0 \end{bmatrix}
  =
  \begin{bmatrix}
    0 & 0\\  0 & I_{\operatorname{Ker}T} \end{bmatrix},
\end{align}
which implies that $\cD_T=0\oplus\operatorname{Ker}T$.
Therefore $F_1$, $F_2$, being operators on $\cD_T$, are of the form
$$
F_1=\begin{bmatrix}
    0 & 0\\
    0 & L_1
  \end{bmatrix}\text{ and }
  F_2=\begin{bmatrix}
    0 & 0\\
    0 & L_2
  \end{bmatrix}
$$
for some operators $L_1,L_2$ on $\operatorname{Ker}T$. Let $T_1$, $T_2$ be of the following form
with respect to the decomposition $\cH=\operatorname{Ran} T^* \oplus\operatorname{Ker} T$:
\begin{align}\label{FormT1&T2}
T_j=\begin{bmatrix}
    A_j & B_j\\
    C_j & D_j
  \end{bmatrix} \text{ for }j=1,2
\end{align}and
\begin{align}\label{FormT}
T=\begin{bmatrix} Y & 0\\ X & 0 \end{bmatrix} \colon \operatorname{Ran}  T^* \oplus \operatorname{Ker} T
\to   \operatorname{Ran}T^* \oplus \operatorname{Ker} T.
\end{align}
Because $T$ is a partial isometry, by Lemma \ref{Lem:PI}, $Z=\begin{bmatrix}Y & X\end{bmatrix}^T$ is an isometry. Now
\begin{align*}
&T_1-T_2^*T=D_TF_1D_T\\
\Leftrightarrow &\begin{bmatrix}
    A_1 & B_1\\
    C_1 & D_1
  \end{bmatrix} -\begin{bmatrix}
    A_2^* & C_2^*\\
    B_2^* & D_2^*
  \end{bmatrix} \begin{bmatrix}
    Y & 0\\
    X & 0
  \end{bmatrix}= \begin{bmatrix}
    0 & 0\\
    0 & I_{\operatorname{Ker}T}
  \end{bmatrix}
  \begin{bmatrix}
    0 & 0\\
    0 & L_1
  \end{bmatrix}
   \begin{bmatrix}
    0 & 0\\
    0 & I_{\operatorname{Ker}T}
  \end{bmatrix} \\
\Leftrightarrow&
\begin{bmatrix}
    A_1-(A_2^*Y+C_2^*X) & B_1\\
    C_1-(B_2^*Y+D_2^*X) & D_1
  \end{bmatrix}=\begin{bmatrix}
    0 & 0\\
    0 & L_1
  \end{bmatrix}.
\end{align*}Therefore by the above computation and a similar treatment with $T_2-T_1^*T=D_TF_2D_T$ we obtain the following for $(i,j)=(1,2)$ or $(2,1)$:
\begin{align}\label{Eqns}
\begin{cases}
(i)\; L_i=D_i, \quad \quad \;\;(iii)\; A_i=A_j^*Y+C_j^*X=
            \begin{bmatrix}
              A_j^* & C_j^*
            \end{bmatrix}\begin{bmatrix}
              Y & X
            \end{bmatrix}^T,\\
(ii)\, B_i=0, \quad \quad \quad (iv)\; C_i=D_j^*X.
\end{cases}
\end{align}Therefore for $(i,j)=(1,2)$ or $(2,1)$,
\begin{align}\label{T1&T2}
T_i=\begin{bmatrix}
    Z_j^*Z & 0\\
    D_j^*X & D_i
  \end{bmatrix},
\end{align}where for $(i,j)=(1,2)$ or $(2,1)$, $Z_i=\begin{bmatrix}Z_j^*Z & D_j^*X\end{bmatrix}^T$ is the first column of $A_i$. Equation (\ref{Eqns}) (i) implies that
\begin{align}\label{FundOps}
F_1=\begin{bmatrix}
    0 & 0\\
    0 & D_1
  \end{bmatrix}\text{ and }
  F_2=\begin{bmatrix}
    0 & 0\\
    0 & D_2
  \end{bmatrix},
\end{align}
and (1) and (2) now follow.
\end{proof}

Note that if the pair $(T_1,T_2)$ of contractions in Proposition \ref{P:varPal} is commuting, then
 $T_1T_2|_{\operatorname{Ker}T}=T_2T_1|_{\operatorname{Ker}T}$. Hence part (1) in Proposition \ref{P:varPal} is automatic. Moreover, if the commuting pair $(T_1,T_2)$ of contractions is such that $(T_1,T_2,T)$ is a tetrablock contraction, then by Theorem \ref{Thm:BhatTet}, there exist two operators $F_1,F_2$ in $\cB(\cD_T)$ such that (\ref{FundEqns}) holds. Hence the following is a simple consequence of Proposition \ref{P:varPal}.

\begin{corollary}\label{CP:varPal}
Let $(T_1,T_2,T)$ be a tetrablock contraction on a Hilbert space $\cH$ with fundamental operators $F_1,F_2$. If $T$ is a partial isometry, then:
\begin{enumerate}
  \item $\operatorname{Ker}T$ is jointly invariant under $(T_1,T_2)$.
  \item $F_1F_2=F_2F_1$.
  \item $F_1^*F_1-F_1F_1^*=F_2^*F_2-F_2F_2^*$  if and only if
  $D_1^*D_1-D_1D_1^*=D_2^*D_2-D_2D_2^*$, where $(D_1,D_2)=(T_1,T_2)|_{\operatorname{Ker}T}$.
\end{enumerate}
\end{corollary}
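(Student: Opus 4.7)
The plan is to read off Corollary~\ref{CP:varPal} as an immediate consequence of Proposition~\ref{P:varPal}, after first checking that the hypotheses of the Proposition are in force. The new data in the corollary are that $(T_1,T_2,T)$ is a bona fide tetrablock contraction, which means (by definition) that $T_1$ and $T_2$ commute, and (by Theorem~\ref{Thm:BhatTet}) that the fundamental operators $F_1,F_2\in\cB(\cD_T)$ exist and satisfy precisely the identities \eqref{FundEqns} posited in Proposition~\ref{P:varPal}. Together with the hypothesis that $T$ is a partial isometry, this puts us in a position to apply Proposition~\ref{P:varPal} directly.

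First I would deduce item (1): this is literally part (1) of Proposition~\ref{P:varPal}, so there is nothing further to do. Next, for item~(2), I would use the extra commutativity $T_1T_2=T_2T_1$ coming from the tetrablock-contraction hypothesis. Because $\operatorname{Ker}T$ is jointly invariant under $(T_1,T_2)$ by item~(1), restricting the identity $T_1T_2=T_2T_1$ to $\operatorname{Ker}T$ gives $D_1D_2=D_2D_1$. Part~(2)(a) of Proposition~\ref{P:varPal} then produces $F_1F_2=F_2F_1$, establishing item~(2).

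Finally, item~(3) is the direct translation of part~(2)(b) of Proposition~\ref{P:varPal}; since both sides of the equivalence are stated there under exactly the same hypotheses on $T$ and on the existence of the fundamental operators $F_1,F_2$, nothing beyond invoking the Proposition is needed.

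Because every nontrivial calculation (namely, the block-matrix analysis of $T$, $T_1$, $T_2$ with respect to $\cH=\operatorname{Ran}T^*\oplus\operatorname{Ker}T$ and the identification $F_j=0\oplus D_j$ via \eqref{FundOps}) has already been carried out in the proof of Proposition~\ref{P:varPal}, there is essentially no obstacle here; the only substantive point to verify is the one highlighted above, namely that the commutativity of $T_1,T_2$ built into the tetrablock-contraction hypothesis is exactly what upgrades part~(2)(a) of Proposition~\ref{P:varPal} from an equivalence into the unconditional conclusion $F_1F_2=F_2F_1$ in item~(2) of the corollary.
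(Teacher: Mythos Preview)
Your proposal is correct and follows essentially the same approach as the paper: the paper also notes that the tetrablock-contraction hypothesis supplies commutativity of $T_1,T_2$ and, via Theorem~\ref{Thm:BhatTet}, the fundamental equations \eqref{FundEqns}, so that Proposition~\ref{P:varPal} applies; items (1) and (3) are then read off directly, while item (2) comes from restricting $T_1T_2=T_2T_1$ to $\operatorname{Ker}T$ and invoking part~(2)(a).
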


We next present an example of a tetrablock contraction which has a tetrablock-isometric lift for which
condition (3) (in either of the equivalent forms) in Corollary \ref{CP:varPal} fails.  Thus the set of sufficient
conditions for existence of a tetrablock-isometric lift in Theorem \ref{T:Bhat-suf} fails in general to be also
necessary, even when the tetrablock contraction $(T_1, T_2, T)$ has the special form where $T$ is a partial
isometry.

\begin{example}  \label{E:counterexample} {\rm
Consider the following pair of contractions on $H^2\oplus H^2$:
\begin{align}\label{TheForm}
(T_1,T_2)=\left(\begin{bmatrix} 0 & 0 \\ I & 0 \end{bmatrix},\begin{bmatrix}T_z & 0 \\ 0& T_z \end{bmatrix}\right).
\end{align}
Note that
\begin{align}\label{commuting}
&T_1T_2=\begin{bmatrix} 0 & 0 \\ T_z & 0 \end{bmatrix}=T_2T_1=:T.
\end{align}
Therefore by Theorem \ref{T:thekeylem} the triple $(T_1,T_2,T)$ is a tetrablock contraction which moreover
has a tetrablock-isometric lift as a consequence of Theorem \ref{T:pair-dilate}.  From the form of $T$ in \eqref{commuting} we see that furthermore $T$ is a partial isometry since $T_z$ is an isometry, so Corollary
\ref{CP:varPal} applies to this choice of operator triple $(T_1, T_2, T)$.  As a first step toward computing
the fundamental operators $F_1$, $F_2$ for $(T_1, T_2, T)$, we compute the defect operator for $T$:
\begin{align}\label{defect}
 D_T^2=\begin{bmatrix} I_{H^2} & 0 \\ 0 & I_{H^2} \end{bmatrix}-\begin{bmatrix} 0 & T_z^* \\ 0 & 0 \end{bmatrix}\begin{bmatrix} 0 & 0 \\ T_z & 0 \end{bmatrix}=\begin{bmatrix} 0 & 0 \\ 0 & I_{H^2} \end{bmatrix}=D_T.
\end{align}
 Let us set
\begin{align}\label{TheFund}
(A_1,A_2):=(0,T_z).
\end{align}
The computations
\begin{align*}
T_1-T_2^*T=\begin{bmatrix} 0 & 0 \\ I_{H^2} & 0 \end{bmatrix}-\begin{bmatrix} T_z^* & 0 \\ 0 & T_z^* \end{bmatrix}\begin{bmatrix} 0 & 0 \\ T_z & 0 \end{bmatrix}&=\begin{bmatrix} 0 & 0 \\ 0 & 0 \end{bmatrix}\\
&=\begin{bmatrix} 0 & 0 \\ 0 & I_{H^2} \end{bmatrix}\begin{bmatrix} 0 & 0 \\ 0 & 0 \end{bmatrix}
\begin{bmatrix} 0 & 0 \\ 0 & I_{H^2} \end{bmatrix}=D_TA_1D_T,
\end{align*}
and
\begin{align}
T_2-T_1^*T=\begin{bmatrix} T_z & 0 \\ 0 & T_z \end{bmatrix}-\begin{bmatrix} 0 & I_{H^2} \\ 0 & 0 \end{bmatrix}\begin{bmatrix} 0 & 0 \\ T_z & 0 \end{bmatrix}&=\begin{bmatrix} 0 & 0 \\ 0 & T_z \end{bmatrix}\\
&=\begin{bmatrix} 0 & 0 \\ 0 & I_{H^2} \end{bmatrix}\begin{bmatrix} 0 & 0 \\ 0 & T_z \end{bmatrix}\begin{bmatrix} 0 & 0 \\ 0 & I_{H^2} \end{bmatrix}=D_TA_2D_T
\end{align}
then show that $(A_1, A_2)$ are equal to the fundamental operators $(F_1, F_2)$ for $(T_1, T_2, T)$.
Observe that
$$
 A_1^* A_1 - A_1 A_1^* = 0 \text{ while } A_2^* A_2 - A_2 A_2^* = I_{H^2} - T_z T_z^* = P_0 \ne 0
 $$
 (where $P_0$ is the orthogonal  projection of $H^2$ onto the constant functions in $H^2$), so the first form
 of condition (3) in Corollary \ref{CP:varPal} is violated.
}\end{example}

\begin{remark} \label{R:Pal}
{\em  In \cite{Spal} Pal considered the class of tetrablock contractions $(T_1, T_2, T)$ on a Hilbert space
decomposed as $\cH = \cH_1 \oplus \cH_1$ for another Hilbert space $\cH_1$ such that
\begin{enumerate}
\item[(i)] $\operatorname{Ker} D_T = \cH_1 \oplus \{0\}$  and  $\cD_T = \{0 \} \oplus \cH_1$,
\item[(ii)] $T (\cD_T) = \{0\}$  and $T \operatorname{Ker} D_T \subset \cD_T$.
\end{enumerate}
If $T$ is a Hilbert space operator satisfying the first of conditions (ii), then in particular
$$
  0 = T D_T^2 = T (I  - T^* T) = T - T T^* T,
$$
i.e., $T = T T^* T$ implying that $T^* T$ is a projection which is one of the equivalent conditions for $T$ to be
a partial isometry.
Thus the class of tetrablock contractions considered
in Corollary \ref{CP:varPal} includes the class of tetrablock contractions $(T_1, T_2, T)$ with $T$ satisfying
Pal's conditions (i), (ii) above.  In Proposition 4.5 of \cite{Spal}, Pal asserts that the two sufficient conditions
from Theorem \ref{T:Bhat-suf} on the fundamental operators $(F_1, F_2)$ for $(T_1, T_2, T)$ for
existence of a tetrablock-isometric lift, namely
\begin{enumerate}
\item[(P1)] $F_1 F_2 = F_2 F_1$, \\
\item[(P2)] $F_1^* F_1 - F_1 F_1^* = F_2^* F_2 - F_2 F_2^*$,
\end{enumerate}
are also necessary for the case where the tetrablock contraction$(T_1, T_2, T)$ satisfies conditions (i) and (ii)
above.  From Corollary \ref{CP:varPal} we see that
condition (P1) in fact holds for any tetrablock contraction of this special form, independently of whether or not it has a
tetrablock-isometric lift.  Furthermore, one can check that the tetrablock contraction given in
Example \ref{E:counterexample} can be represented on a Hilbert space of the form $\cH = \cH_1 \oplus \cH_1$
with Pal's conditions  (i) and (ii) satisfied.  As we have seen, this example is a tetrablock contraction having
a tetrablock-isometric lift which violates condition (P2).

We note that Example \ref{E:counterexample} is just one sample
of a general class of such counterexamples.  We let $(T_1, T_2)$ be a commuting pair of contraction operators
such that $(i) T = T_1 T_2$ is a partial isometry and (ii) the commuting pair
$(D_1, D_2) = (T_1, T_2)|_{\operatorname{Ker} T}$ is such that $D_1^* D_1 - D_1 D_2^* \ne
D_2^* D_2 D_2 - D_2 D_2^*$.  Then the tetrablock contraction $(T_1, T_2, T)$ does have a tetrablock-isometric
lift but fails to satisfy condition (P2).  The point of Example \ref{E:counterexample} is to show that this
class is nonempty.

In Section 5 of \cite{Spal}, Pal considers the following candidate for a tetrablock contraction not having a
tetrablock-isometric lift. Let $\cH=\cH_1\oplus\cH_1$, where $\cH_1=H^2(\mathbb{C}^2)\oplus H^2(\mathbb{C}^2)$,
\begin{align}\label{PalEg}
(T_1,T_2,T)=\left(\begin{bmatrix}
    0 & 0\\
    0 & J
  \end{bmatrix},\begin{bmatrix}
    0 & 0\\
    0 & 0
  \end{bmatrix},\begin{bmatrix}
    0 & 0\\
    Y & 0
  \end{bmatrix}\right),
\end{align}
where
\begin{align} \label{Entries}
&J:=  \begin{bmatrix} H & 0 \\ 0 & 0 \end{bmatrix} \text{ with }
H = (I - M_z M_z^*) \otimes H_1 \text{ and }
H_1 = \begin{bmatrix} 0 & \frac{1}{4} \\ 0 & 0 \end{bmatrix} \text{ on } {\mathbb C}^2, \\
&Y:    = \begin{bmatrix} 0 & M_z\\ I & 0 \end{bmatrix}  \text{ on } \cH_1.
\end{align}
We note that $Y$ is an isometry and that $D_T = D_T^2  = \left[ \begin{smallmatrix} 0 & 0 \\ 0 & I_{\cH_1}
\end{smallmatrix} \right]$.   The fact that $JY = 0$ has the consequence that  $(T_1, T_2, T)$ as in \eqref{PalEg}
is a commutative operator triple.  Once it is further checked that $(T_1, T_2, T)$ is a tetrablock contraction,
one can calculate the fundamental operators:
$$
F_1 = J, \quad F_2 = 0
$$
where here we identify $ \{0 \} \oplus \cH_1$ with $\cH_1$.
Pal then concludes that this tetrablock contraction does not have a tetrablock-isometric lift
by arguing that the fundamental
operators $F_1,F_2$ of $(T_1,T_2,T)$ do not satisfy condition (P2) above.

However, as noted above, Example \ref{E:counterexample} above shows that condition (P2) is not necessary
for the existence of a tetrablock-isometric lift after all, so additional evidence is required to verify that
this example fails to have a tetrablock-isometric lift.

 Indeed, the fact that $(F_1, F_2)$ fails to satisfy condition (P2)
can be seen as a consequence of the second formulation of condition (P2) in part (3) of Corollary \ref{CP:varPal}  as follows.   Since $Y$ is an isometry, it follows that
$T$ is a partial isometry. Hence Corollary \ref{CP:varPal} applies to $(T_1,T_2,T)$. Let us note that
$$
\operatorname{Ker} T = \operatorname{Ker} \begin{bmatrix} 0 & 0 \\ Y & 0 \end{bmatrix} =
\begin{bmatrix} \{0\} \\ H^2({\mathbb C}^2)) \end{bmatrix}.
$$
Hence
$$
D_1 = T_1|_{\operatorname{Ker} T}  = J, \quad D_2 = T_2|_{\operatorname{Ker} T}  = 0.
$$
From part (3) of Proposition \ref{P:varPal} we see directly that $(F_1,F_2$) does not satisfy condition (P2)
since $J^*J-JJ^*\neq 0$.
}\end{remark}

\noindent{\bf Acknowledgement.} This work was done when the second named author was visiting Virginia Tech as an SERB Indo-U.S. Postdoctoral Research Fellow. He wishes to thank the Department of Mathematics, Virginia Tech for all the facilities provided to him.


\begin{thebibliography}{99}
\bibitem{awy} A. A. Abouhajar, M. C. White and N. J. Young, {\em A
Schwarz lemma for a domain related to $\mu$-synthesis}, J.
Geom. Anal. 17 (2007), 717-750.

\bibitem{agler-ann} J.\  Agler, {\em{Rational dilation on an annulus}}, Ann. of Math. 121 (1985), 537-563.

\bibitem{AHR}  J.\  Agler, J.\ Harland and B.J.\ Raphael, {\em Classical function theory, operator dilation theory, and machine computation on multiply-connected domains}, Mem.\ Amer.\ Math.\ Soc.\ \textbf{191} (2008) no.\ 892,
viii+159 pp.

\bibitem{AY} J. Agler, N. J. Young, {\em{A commutant lifting theorem for a domain in $\mathbb{C}^2$ and spectral interpolation}},  J. Funct. Anal. 161(1999), 452-477.

\bibitem{ando} T. And\^o, {\em{On a pair of commuting contractions}}, Acta Sci. Math. (Szeged) \textbf{24} (1963), 88-90.

\bibitem{ArvesonII}  W.B.\ Arveson, {\em Subalgebras of $C^*$-algebras II}, Acta Math.\ \textbf{128} (1972)
no.\  3--4, 271--308.
\bibitem{Bhat-Tet} T. Bhattacharyya, {\em{The tetrablock as a spectral set}}, Indiana Univ. Math. J. {\bf 63} (2014), 1601-1629.

\bibitem{Curto} R.E.\ Curto, {\em Applications of several complex variables to multiparameter spectral theory},
Surveys of some recent results in operator theory, Vol. II, 25--90, Pitman Res.\ Notes Math.\ Ser.\ \textbf{192},
Longman Sci.\ Tech., Harlow, 1988.

\bibitem{DritMcCu}M. A. Dritschel, S. McCullough, {\em Failure of rational dilation on a triply connected domain}, J. Amer. Math. Soc., {\bf 18} (2005), 873-918.
\bibitem{DP} G.E.~Dullerud and F.~Paganini, {\em A Course in Robust Control
  Theory: A Convex Approach}, Texts in Applied Mathematics Vol.~{\bf 36},
  Springer-Verlag, New York, 2000.

\bibitem{Spal}S. Pal, {\em{The failure of rational dilation on the tetrablock}}, J. Functional Analysis, {\bf{269}} (2015)1903-1924.

\bibitem{sauAndo} H. Sau, {\em{And\^{o} dilations for a pair of commuting contractions: two explicit constructions and functional models}}, arXiv:1710.11368 [math.FA].

\bibitem{SauNYJM} H. Sau, {\em {A note on tetrablock contractions}}, New York J. Math. {\bf 21} (2015) 1347-1369.

\bibitem{Varopoulos}  N.\ Th.\ Varopoulos, {\em On an inequality of von Neumann and an application of
the metric theory of tensor products to operator theory},  J.\ Functional Analysis \textbf{16} (1974), 83--100.

\end{thebibliography}
\end{document}